\newcommand{\arxiv}[1]{\href{http://arxiv.org/pdf/#1}{arXiv:#1}}   
\providecommand \@dotsep{5} \def\listtodoname{List of Todos} \def\listoftodos{\@starttoc{tdo}\listtodoname}\makeatother 
\patchcmd{\@startsection}{\@afterindenttrue}{\@afterindentfalse}{}{}             
\patchcmd{\section}{\scshape}{\bfseries}{}{}\renewcommand{\@secnumfont}{\bfseries} 
\patchcmd{\@settitle}{\uppercasenonmath\@title}{\large}{}{}
\patchcmd{\@setauthors}{\MakeUppercase}{}{}{}
\theoremstyle{plain}
\newtheorem{thm}{Theorem}
\newtheorem{prop}[thm]{Proposition}
\newtheorem*{question*}{Question}
\DeclareSymbolFont{cmlargesymbols}{OMX}{cmex}{m}{n}                           
\DeclareMathSymbol{\mycoprod}{\mathop}{cmlargesymbols}{"60}\let\coprod\mycoprod
\renewcommand\emph[1]{\textbf{#1}}         
\DeclareMathOperator{\Star}{Star}
\DeclareMathOperator{\csm}{csm}
\newcommand\R{{\mathbb R}}
\newcommand\cF{{\mathcal F}}
\newcommand\fc{{\mathfrak c}}
\newcommand\rk{\textup{rk\,}}
\renewcommand\leq{\leqslant}
\renewcommand\geq{\geqslant}
\newcommand\cover{<:}
  \title{Higher balancing for locally tropically convex tropical varieties}
\author{Emilio Assemany}
\address{\rm Emilio Assemany, Instituto Nacional de Matem\'atica Pura e Aplicada, Rio de Janeiro, Brazil}
\email{\href{mailto:emiliopassemany@gmail.com}{emiliopassemany@gmail.com}}
\author{Oliver Lorscheid}
\address{\rm Oliver Lorscheid, Instituto Nacional de Matem\'atica Pura e Aplicada, Rio de Janeiro, Brazil}
\email{\href{mailto:oliver@impa.br}{oliver@impa.br}}
\begin{document}

\begin{abstract} 
 In this text, we show that locally tropically convex tropical varieties in $\R^n$ have locally a canonical polyhedral structure that satisfies higher balancing conditions at all cells of positive codimension.
\end{abstract}

\maketitle


\section*{Introduction}
\label{introduction}

\subsection*{Locally tropically convex tropical varieties}

A subset $X$ in $\R^n$ is \emph{tropically convex} if for all $x,y\in X$ and $a,b\in\R$, the tropical linear combination $z=(a\odot x)\oplus(b\odot y)$ (with coordinates $z_i=\min\{a+x_i,b+y_i\}$) is contained in $X$. A subset $X$ in $\R^n$ is \emph{locally tropically convex} if every point of $X$ has an open tropically convex neighborhood.

By \cite[Prop.\ 3.3]{Hampe15}, a tropical variety $X$ is locally tropically convex if and only if for every point $p$ of $X$, the (underlying set of the) star $\Star_X(p)$ at $p$ is the (underlying set of the) Bergman fan of a matroid. Therefore the balancing conditions of locally tropically convex tropical varieties can be traced back to the balancing conditions for Bergman fans.

Before we turn to Bergman fans in more detail, we cite some general facts about locally tropically convex tropical varieties; for definitions cf.\ \cite{Hampe15} and \cite{Speyer08}. A tropical variety is locally tropically convex if and only if it is tropically convex as a set (\cite[Thm.\ 1.2]{Hampe15}), and tropical linear spaces are tropically convex (\cite[Prop.\ 2.14]{Hampe15}). In fact, the only discrepancy between tropical linear spaces and tropically convex tropical varieties lies in the possibility of higher weights: by \cite[Thm.\ 1.1]{Hampe15}, the underlying set of a tropically convex tropical variety is equal to the underlying set of a tropical linear space.


\subsection*{A preview on higher balancing}

It is well-known that the Bergman fan of a matroid is a tropical variety, which means that it satisfies the balancing condition for all top-dimensional cones containing a given cone of codimension $1$ (with respect to a constant weight function). In this text, we show that a Bergman fan satisfies balancing conditions for polyhedra of any codimension. These higher balancing conditions require a distinction of the cones of a Bergman fan according to their `types', which is a finer invariant than the dimension. The formulation of the higher balancing conditions requires some preparatory definitions and can be found in Theorem \ref{thm: higher balancing}.


\subsection*{A remark on the relevance of higher balancing}

Locally tropically convex tropical varieties are blessed with the property that they look locally like a Bergman fan, and therefore inherit a canonical polyhedral structure that satisfies higher balancing conditions with respect to a constant weight function. Other types of tropical varieties do not come with such an intrinsic structure, but one needs additional information in order to extend the weight function to polyhedra of higher codimension.

Tentative calculations show that in good cases a tropical basis for the tropical variety provides enough structure to define weight functions for which higher balancing holds. In particular, this works well for hypersurfaces with one defining equation. We have hopes to extend this to all tropicalizations of classical varieties. It might also apply to tropical prevarieties that are defined by tropical ideals in the sense of \cite{Maclagan-Rincon18}, but at the time of writing it is not even clear whether they are balanced in codimension $1$.

To conclude, we see higher balancing as an indication for that there might be interesting information about the tropicalizations of classical varieties that has not been used so far. Our hope is that this additional information finds a satisfactory explanation in terms of tropical scheme theory, as developed in \cite{Giansiracusa-Giansiracusa16}, \cite{Lorscheid15} and \cite{Maclagan-Rincon14}. In particular, we would like to propose the following question as a guiding problem for further developments in tropical scheme theory.

\begin{question*}
 For which subschemes of the tropical torus (i.e.\ ideals in the semiring of tropical Laurent polynomials) can we make sense of higher balancing?
\end{question*}


\subsection*{Acknowledgements}

We would like to thank Felipe Rinc\'on and Kristin Shaw for their comments on a first version on this paper.


\section{The Bergman fan of a matroid}
\label{section: the Bergman fan of a matroid}

The Bergman complex of a matroid was introduced by Sturmfels in \cite{Sturmfels02}. Subsequently the related notion of the Bergman fan of a matroid was introduced by Ardila and Klivans in \cite{Ardila-Klivans06}. We will review this theory in the following. For a pleasant introduction, cf.\ section 2.2 of \cite{Huh14}. 

Let $M$ be a matroid with ground set $E$. The \emph{Bergman fan of $M$} is the fan $B(M)$ in $\R^E$ whose cones are defined as follows. Let $\{e_i\}_{i\in E}$ be the standard basis of $\R^E$. For a subset $F$ of $E$, we define $e_F=\sum_{i\in F}e_i$. A \emph{flag of flats} is a tuple $\cF=(F_0,\dotsc,F_d)$ of flats $F_i$ of $M$ such that
\[
 \emptyset \, = \, F_0 \ \subsetneq \ F_1 \ \subsetneq \ \dotsb \ \subsetneq \ F_d \, = \, E.
\]
The \emph{cone of $\cF$} is defined as
\[
 \fc_\cF \ = \ \sum_{i=0}^{d-1} \R^+ \cdot e_{F_i} + \R\cdot e_E \ = \ \bigg\{ \, \sum_{i=0}^{d} \lambda_ie_{F_i} \, \bigg| \, \lambda_i\in\R^+\text{ for }i=1,\dotsc,d-1\text{ and }\lambda_d\in\R \, \bigg\}
\]
where $\R^+$ are the nonnegative reals. 

By definition, $\fc_\cF$ is a rational cone of dimension $d$ (in the sense of toric geometry, cf.\ \cite{Fulton93}). The maximal linear subspace contained in $\fc_\cF$ is the line spanned by $e_E$. Let $|\cF|=\{F_0,\dotsc,F_d\}$. We have an inclusion $\fc_{\cF'} \subset \fc_{\cF}$ if and only if $|\cF'|\subset|\cF|$. In this case $\fc_{\cF'}$ is a face of $\fc_\cF$, and every face of $\fc_\cF$ is of the form $\fc_{\cF'}$ for some flag of flats $\cF'$. In particular, every cone $\fc_\cF$ contains $\fc_{(\emptyset,E)}=\R\cdot e_E$ as its unique $1$-dimensional face. Given two flags of flats $\cF$ and $\cF'$, the intersection of the associated cones is $\fc_\cF\cap\fc_{\cF'}=\fc_{\cF''}$ where $\cF''$ is the flag of flats with $|\cF''|=|\cF|\cap|\cF'|$. This shows that $B(M)$ is an equidimensional polyhedral complex whose dimension is equal to the rank $\rk M$ of $M$. 

Note that some authors (e.g.\ Hampe in \cite{Hampe15}) define the Bergman fan of $M$ as the image of $B(M)$ in $\R^E/\R\cdot e_E$. This image is indeed a fan in the sense of toric geometry since all cones become strictly convex. Concerning balancing conditions, it does not make any essential difference, which version of the Bergman fan one uses. For our purposes, we find it more convenient to follow the definition of this paper.

Note further that the matroid $M$ is determined by its Bergman fan $B(M)$ since a subset $F$ of $E$ is a flat if and only if $e_F$ is contained in the \emph{underlying set of $B(M)$}, which is the union $|B(M)|=\bigcup\fc_\cF$ of all cones $\fc_\cF$ of $B(M)$.

Let $\cF=(F_0,\dotsc,F_d)$ be a flag of flats. The \emph{type of $\cF$} is the tuple $(\rk F_0,\dotsc,\rk F_d)$, which is a tuple of strictly increasing integers with $\rk F_0=0$ and $\rk F_d=\rk M$.


\section{Balancing in codimension \texorpdfstring{$1$}{1}}

The Bergman fan of a matroid is a tropical variety in the sense that it is balanced at cones of codimension $1$ with respect to a constant weight function on the top-dimensional cones. This balancing condition was first considered in Speyer's thesis \cite{Speyer05}, and can be formulated as follows for Bergman fans. For details on matroid theory, we refer to \cite{Oxley92}.

Let $M$ be a matroid with ground set $E$. Given flats $F$ and $F'$, we write $F\leq F'$ if $F\subset F'$. We say that \emph{$F'$ covers $F$}, and write $F\cover F'$, if $F\leq F'$ and $\rk F'=\rk F+1$, i.e.\ if there exists no flat strictly in between $F$ and $F'$. 

\begin{prop}\label{prop: usual balancing}
 Consider a flag of flats $\cF=(F_0,\dotsc,F_d)$ of length $d=\rk M-1$, i.e.\ the type of $\cF$ is $(0,1,\dotsc,i,i+2,\dotsc,r)$ for some $i\in\{0,\dotsc, r-2\}$ where $r=\rk M$. Then 
 \[
  \sum_{F_{i}\cover F'\leq F_{i+1}} e_{F'} \quad \in \quad \fc_\cF. 
 \]
\end{prop}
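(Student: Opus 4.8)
The plan is to compute the vector $v=\sum_{F_i\cover F'\leq F_{i+1}}e_{F'}$ in closed form and then read off membership in $\fc_\cF$ from the explicit description of that cone given above. Concretely, writing $N$ for the number of flats $F'$ occurring in the sum (equivalently, the flats of rank $i+1$ lying strictly between $F_i$ and $F_{i+1}$; strictness is automatic since $\rk F_{i+1}=i+2$), I would establish the identity
\[
 \sum_{F_i\cover F'\leq F_{i+1}} e_{F'} \ = \ (N-1)\,e_{F_i} + e_{F_{i+1}}.
\]
Granting this, the proposition follows immediately: in the parametrization $v=\sum_{k=0}^{d}\lambda_k e_{F_k}$ of points of $\fc_\cF$ we have taken $\lambda_i=N-1$, $\lambda_{i+1}=1$ and $\lambda_k=0$ otherwise. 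Since $N\geq 1$ (see below), the coefficient $\lambda_i=N-1$ is $\geq 0$ — and if $i=0$ it multiplies $e_{F_0}=0$ in any case — while $\lambda_{i+1}=1\geq 0$, which is the sign required unless $i+1=d$, in which event $e_{F_{i+1}}=e_E$ spans the lineality of $\fc_\cF$ and no sign constraint applies. Hence $v\in\fc_\cF$, which incidentally reproves the codimension‑$1$ balancing of $B(M)$ at $\fc_\cF$ with constant weight $1$.

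To prove the displayed identity I would compute $v$ coordinate by coordinate. The coefficient of $e_j$ in $v$ is the number of flats $F'$ in the sum with $j\in F'$. If $j\notin F_{i+1}$ this is $0$, since every $F'$ in the sum satisfies $F'\subseteq F_{i+1}$; if $j\in F_i$ it is $N$, since every such $F'$ contains $F_i$. The remaining case is $j\in F_{i+1}\setminus F_i$, and here the claim is that there is exactly one flat $F'$ in the sum with $j\in F'$, namely $F'=\mathrm{cl}(F_i\cup\{j\})$; granting this, the coefficient is $1$. Comparing the three cases with the coordinates of $(N-1)e_{F_i}+e_{F_{i+1}}$ (coefficient $N$ on $F_i$, coefficient $1$ on $F_{i+1}\setminus F_i$, coefficient $0$ outside $F_{i+1}$) then gives the identity.

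The one step that uses the matroid structure — and the only genuine obstacle, though it is short — is the uniqueness claim: that $\mathrm{cl}(F_i\cup\{j\})$ is the unique flat of rank $i+1$ lying between $F_i$ and $F_{i+1}$ and containing $j$. Here I would use that $M$ is loopless (this is forced, since $F_0=\emptyset$ is required to be a flat) and that $j\notin F_i=\mathrm{cl}(F_i)$, so adjoining $j$ to $F_i$ raises the rank by exactly one; thus $\mathrm{cl}(F_i\cup\{j\})$ has rank $i+1$, and it lies in $F_{i+1}$ because $F_i\cup\{j\}\subseteq F_{i+1}$ and $F_{i+1}$ is a flat. Conversely, any flat $F'$ of rank $i+1$ with $F_i\cup\{j\}\subseteq F'$ contains $\mathrm{cl}(F_i\cup\{j\})$, and two flats of the same rank one of which contains the other must coincide. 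In particular such a flat exists, so $N\geq 1$, which is what was used above. (One can in fact check $N\geq 2$, since the interval $[F_i,F_{i+1}]$ in the lattice of flats is a geometric lattice of rank $2$, but $N\geq 1$ suffices.)
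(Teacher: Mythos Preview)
Your argument is correct and is essentially the paper's proof: both establish the identity $\sum_{F_i\cover F'\leq F_{i+1}} e_{F'}=(N-1)\,e_{F_i}+e_{F_{i+1}}$ and then read off membership in $\fc_\cF$. The only cosmetic difference is that the paper invokes the flat-covering (partition) axiom $F_{i+1}\setminus F_i=\coprod_{F_i\cover F'\leq F_{i+1}}(F'\setminus F_i)$ directly, whereas you re-derive that partition one element at a time via $F'=\mathrm{cl}(F_i\cup\{j\})$; the content is the same.
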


\begin{proof}
 In the following, we reproduce the short argument from Huh's thesis (\cite[Prop.\ 16]{Huh14}).

 As a first step, we consider the restriction $M'=M\vert_{F_{i+1}}$ of $M$ to $F_{i+1}$, which results from $M$ by deleting $E-F_{i+1}$. The ground set of $M'$ is $E'=F_{i+1}$ and its flats are precisely those flats of $M$ that are contained in $F_{i+1}$. The matroid axiom for flats, applied to $F_{i}$ as a flat of $M'$, states that
 \[
  F_{i+1}\,-\,F_{i} \ = \ \coprod_{F_{i}\cover F'\leq F_{i+1}} F' \,-\, F_{i}.
 \]
 Therefore
 \[
  e_{F_{i+1}}-e_{F_{i}} \ = \ \sum _{F_{i}\cover F'\leq F_{i+1}} (e_{F'} - e_{F_{i}}) 
 \] 
 and thus
 \[ 
  \sum _{F_{i}\cover F'\leq F_{i+1}} e_{F'} \ = \ e_{F_{i+1}}+(m-1)e_{F_{i}} \quad \in \quad \fc_{\cF_{i}}
 \]
 where $m\geq1$ is the number of flats $F'$ with $F_{i}\cover F'\leq F_{i+1}$.
\end{proof}


\section{Higher balancing}
\label{section: higher balancing}

We are prepared to state and prove the main result of this text.

\begin{thm}\label{thm: higher balancing}
 Let $M$ be a matroid with ground set $E$ and $\cF=(F_0,\dotsc,F_d)$ a flag of flats of type $(r_0,\dotsc,r_d)$. Let $i$ and $k$ be integers such that $0\leq i\leq d-1$ and $1\leq k\leq r_{i+1}-r_{i}$. Then
 \begin{multline*}
  \sum_{F^{(0)}\cover \dotsb\cover F^{(k)}\leq F_{i+1}} \hspace{-20pt} (e_{F^{(k)}}-e_{F^{(k-1)}}) \quad + \quad \sum_{l=1}^{k-1} (-1)^{k-l}  \hspace{-25pt} \sum_{F^{(0)}\cover \dotsb\cover F^{(l)}\leq F_{i+1}} \hspace{-20pt} (e_{F_{i+1}}-e_{F^{(l-1)}})  \\
  + \quad (-1)^k (e_{F_{i+1}}-e_{F_{i}}) \quad = \quad 0
 \end{multline*}
 where $F^{(0)}=F_{i}$ is fixed and $F^{(1)},\dotsc,F^{(k)}$ vary over all possible flats. We call this relation the \emph{$(i,k)$-balancing condition at $\fc_\cF$}.
\end{thm}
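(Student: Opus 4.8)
The plan is to reduce the $(i,k)$-balancing condition to an iterated application of the flat-axiom identity that was already used in the proof of Proposition \ref{prop: usual balancing}, namely: for flats $F\subsetneq F'$ of $M$,
\[
 e_{F'}-e_{F} \ = \ \sum_{F\cover F''\leq F'}(e_{F''}-e_{F}).
\]
Write $G=F_{i}$ and $H=F_{i+1}$ and set $n=r_{i+1}-r_{i}\geq1$. Every chain of flats $G=F^{(0)}\cover F^{(1)}\cover\dotsb\cover F^{(l)}$ has $\rk F^{(j)}=r_{i}+j$, so the requirement $F^{(l)}\leq H$ forces $l\leq n$; as $1\leq k\leq n$, all chains appearing in the statement are legitimate. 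Moreover, every proper truncation $F^{(0)}\cover\dotsb\cover F^{(l-1)}$ of a chain $F^{(0)}\cover\dotsb\cover F^{(l)}\leq H$ with $l\leq k$ satisfies $\rk F^{(l-1)}=r_{i}+l-1\leq r_{i}+k-1<r_{i+1}=\rk H$, hence $F^{(l-1)}\subsetneq H$. This is the only place where the hypothesis $k\leq r_{i+1}-r_{i}$ enters, and it is exactly what lets us apply the flat-axiom identity to all the flats we shall need.

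Next I would introduce the auxiliary vectors
\[
 P_{l} \ = \ \sum_{G=F^{(0)}\cover\dotsb\cover F^{(l)}\leq H}(e_{F^{(l)}}-e_{F^{(l-1)}})\qquad(1\leq l\leq k),
\]
\[
 Q_{l} \ = \ \sum_{G=F^{(0)}\cover\dotsb\cover F^{(l)}\leq H}(e_{H}-e_{F^{(l)}})\qquad(0\leq l\leq k-1),
\]
where for $l=0$ the unique chain is the single flat $G$, so $Q_{0}=e_{H}-e_{G}$. Grouping the chains defining $P_{l}$ by their length-$(l-1)$ truncation and applying the flat-axiom identity to the final step $F^{(l-1)}\cover F^{(l)}\leq H$ (legitimate since $F^{(l-1)}\subsetneq H$) gives
\begin{align*}
 P_{l} \ &= \ \sum_{G=F^{(0)}\cover\dotsb\cover F^{(l-1)}\leq H}\ \sum_{F^{(l-1)}\cover F^{(l)}\leq H}(e_{F^{(l)}}-e_{F^{(l-1)}}) \\
 &= \ \sum_{G=F^{(0)}\cover\dotsb\cover F^{(l-1)}\leq H}(e_{H}-e_{F^{(l-1)}}) \ = \ Q_{l-1}
\end{align*}
for $1\leq l\leq k$; in particular $P_{1}=Q_{0}=e_{H}-e_{G}$.

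Finally, for $1\leq l\leq k-1$, I would rewrite the $l$-th inner sum appearing in the theorem by the elementary splitting $e_{H}-e_{F^{(l-1)}}=(e_{H}-e_{F^{(l)}})+(e_{F^{(l)}}-e_{F^{(l-1)}})$; summing over the hitherto free index $F^{(l)}$, that inner sum becomes $Q_{l}+P_{l}$, which by the previous step equals $P_{l+1}+P_{l}$. Substituting this, together with $e_{H}-e_{G}=P_{1}$ and the identification of the first summand of the theorem with $P_{k}$, the left-hand side of the asserted equation becomes
\[
 P_{k} \ + \ \sum_{l=1}^{k-1}(-1)^{k-l}(P_{l}+P_{l+1}) \ + \ (-1)^{k}P_{1}.
\]
A single reindexing $m=l+1$ in the $P_{l+1}$-part shows that the coefficient of each $P_{j}$ (for $1\leq j\leq k$) equals $(-1)^{k-j}+(-1)^{k-j+1}=0$, so the whole expression vanishes, which is the $(i,k)$-balancing condition. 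For $k=1$ the middle sum is empty and the identity degenerates to $P_{1}-P_{1}=0$, i.e.\ to the flat-axiom identity itself, recovering the content of Proposition \ref{prop: usual balancing}.

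I do not expect a serious obstacle: once the auxiliary sums $P_{l}$ and $Q_{l}$ are in place the argument is pure bookkeeping, the only nontrivial input being the relation $P_{l}=Q_{l-1}$. The point that genuinely has to be respected is that the flat-axiom identity is applied solely to flats lying strictly below $H=F_{i+1}$; this is precisely what $k\leq r_{i+1}-r_{i}$ guarantees, and it is also what makes the telescoping close up without leftover boundary terms. Alternatively one could organize the same computation as an induction on $k$, with the cases $k=1,2$ as sanity checks, but the telescoping presentation above seems the most economical.
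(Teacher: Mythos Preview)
Your proof is correct and rests on exactly the same ingredient as the paper's---the flat-axiom identity $\sum_{F\cover F''\leq F'}(e_{F''}-e_F)=e_{F'}-e_F$---but you package the argument as a direct telescoping via the auxiliary sums $P_l,Q_l$ and the single relation $P_l=Q_{l-1}$, whereas the paper proceeds by induction on $k$, splitting off the first step $F^{(0)}\cover F^{(1)}$ and applying the inductive hypothesis to the remaining chain. Both arguments unwind the same iterated application of the flat-axiom; your telescoping presentation is a little more economical (no induction bookkeeping), while the paper's inductive version makes the reduction to the $k=1$ case more explicit at each stage.
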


\begin{proof}
 We prove the result by induction on $k$. For the sake of presentation, we will show that the first sum in the balancing condition equals the inverse of the other terms. 
 
 The case $k=1$ follows by the same argument that we have used to prove Proposition \ref{prop: usual balancing} where we note that the proof did not make any use of the assumptions that $\cF$ is of length $d=\rk M-1$ and that $r_{i+1}-r_i=2$. Therefore, we derive that
 \[
  \sum_{F^{(0)}\cover F^{(1)}\leq F_{i+1}} \hspace{-20pt} (e_{F^{(1)}}-e_{F^{(0)}}) \quad = \quad e_{F_{i+1}}-e_{F_{i}} \quad = \quad -\Big[ (-1)^k(e_{F_{i+1}}-e_{F_{i}})\Big],
 \]
 where we use that $(-1)^k=-1$ and that there is no middle term of the form ``$\sum_{l=1}^{k-1}\dotsc$'' since $k-1<1$.
 
 If $k>1$, then we can split the sequences $F^{(0)}\cover \dotsb\cover F^{(k)}\leq F_{i+1}$ into $F^{(0)}\cover F^{(1)}$ and $F^{(1)}\cover \dotsb\cover F^{(k)}\leq F_{i+1}$, which yields an equality
 \[
  \sum_{F^{(0)}\cover \dotsb\cover F^{(k)}\leq F_{i+1}} \hspace{-20pt} (e_{F^{(k)}}-e_{F^{(k-1)}}) \quad = \quad \sum_{F^{(0)}\cover F^{(1)}\leq F_{i+1}} \Bigg[ \sum_{F^{(1)}\cover \dotsb\cover F^{(k)}\leq F_{i+1}} \hspace{-20pt} (e_{F^{(k)}}-e_{F^{(k-1)}}) \Bigg].
 \]
 Applying the inductive hypothesis to the sum inside the brackets transforms this expression into
 \[
  -\sum_{F^{(0)}\cover F^{(1)}\leq F_{i+1}} \Bigg[ \sum_{l=1}^{k-2} (-1)^{(k-1)-l}  \hspace{-25pt} \sum_{F^{(1)}\cover \dotsb\cover F^{(l+1)}\leq F_{i+1}}\hspace{-20pt} (e_{F_{i+1}}-e_{F^{(l)}}) + (-1)^{k-1} (e_{F_{i+1}}-e_{F^{(1)}})\Bigg].
 \]
 Merging the outer sum over $F^{(0)}\cover F^{(1)}\leq F_{i+1}$ with the inner terms and replacing $l$ by $l-1$ yields
 \[
  -\Bigg[ \sum_{l=2}^{k-1} (-1)^{k-l} \hspace{-25pt} \sum_{F^{(0)}\cover \dotsb\cover F^{(l)}\leq F_{i+1}}\hspace{-20pt} (e_{F_{i+1}}-e_{F^{(l-1)}}) + (-1)^{k-1} \hspace{-18pt}\sum_{F^{(0)}\cover F^{(1)}\leq F_{i+1}} \hspace{-13pt}(e_{F_{i+1}}-e_{F^{(1)}})\Bigg].
 \]
 We can apply the case $k=1$ to the sum on the right hand side and get
 \[
  (-1)^{k-1} \hspace{-18pt}\sum_{F^{(0)}\cover F^{(1)}\leq F_{i+1}} \hspace{-13pt}(e_{F_{i+1}}-e_{F^{(1)}}) \quad = \quad (-1)^{k-1} \hspace{-18pt}\sum_{F^{(0)}\cover F^{(1)}\leq F_{i+1}} \hspace{-13pt}(e_{F_{i+1}}-e_{F^{(0)}}) + (-1)^k (e_{F_{i+1}}-e_{F_{i}}).
 \]
 Substituting this term in the expression above produces the desired outcome
 \[
  -\Bigg[ \sum_{l=1}^{k-1} (-1)^{k-l} \hspace{-25pt} \sum_{F^{(0)}\cover \dotsb\cover F^{(l)}\leq F_{i+1}}\hspace{-20pt} (e_{F_{i+1}}-e_{F^{(l-1)}}) + (-1)^{k} (e_{F_{i+1}}-e_{F_{i}})\Bigg]. \qedhere
 \]
\end{proof}


\section{Geometric interpretation of higher balancing}
\label{section: geometric interpretation of higher balancing}

Let $\cF=(F_0,\dotsc,F_d)$ be a flag of flats in $M$ of type $(r_0,\dotsc,r_d)$. The cases of $(i,k)$-balancing for $k=r_{i+1}-r_i$ are degenerate and we will discuss them below. For now we assume that $k<r_{i+1}-r_i$ and write $F<F'$ for $F\subsetneq F'$.

To begin with, we observe that the flags $F^{(0)}\cover \dotsb\cover F^{(l)}< F_{i+1}$ that occur as indices of the sums in the $(i,k)$-balancing condition can be identified with the cones $\fc_{\cF'}$ with 
\[
 \cF'=(F_0,\dotsc,F_{i},F^{(1)},\dotsc,F^{(l)},F_{i+1},\dotsc,F_d).
\]
Thus we can interpret the sum as varying over all cones in $B(M)$ of type 
\[
 (r_0,\dotsc,r_{i},r_{i}+1,\dotsc,r_{i}+l,r_{i+1},\dotsc,r_d)
\]
that contain $\fc_\cF$.
 
Therefore the $(i,1)$-balancing condition
\[
 \sum_{F_{i}\cover F^{(1)}\leq F_{i+1}} \hspace{-20pt} (e_{F^{(1)}}-e_{F_{i}}) - (e_{F_{i+1}}-e_{F_{i}}) \quad = \quad 0
\]
can be rewritten as
\[
 \sum_{\substack{\cF'=(F_0',\dotsc,F_{d+1}')\text{ of type }\\ (r_0,\dotsc,r_{i},r_{i}+1,r_{i+1},\dotsc,r_d)\\\text{such that }\fc_\cF\subset\fc_{\cF'}}} \hspace{-20pt} e_{F_{i+1}'} \quad = \quad e_{F_{i+1}} + (m-1) e_{F_{i}} \quad \in \quad \fc_\cF
\]
where $m\geq1$ is the number of flags $\cF'$ of type $(r_0,\dotsc,r_{i},r_{i}+1,r_{i+1},\dotsc,r_d)$ such that $\fc_\cF\subset\fc_{\cF'}$. Note that $e_{\cF'}$ is a primitive vector for $\fc_{\cF'}$ modulo $\fc_\cF$, which is a ray. In particular, this recovers the usual balancing condition for tropical varieties in the case that $\fc_\cF$ is of dimension $\rk M-1$.

For $k>1$, a geometric interpretation of $(i,k)$-balancing involves different `types' of `primitive vectors', one for each ray of $\fc_{\cF'}$ that is not contained in $\fc_{\cF}$. Without spelling out the obvious formula, the $(i,k)$-balancing condition states that a certain linear combination of primitive vectors of (the rays of) cones containing $\fc_\cF$, ordered by their types, is contained in the linear subspace spanned by $\fc_\cF$.

Balancing in the degenerate case $k=r_{i+1}-r_i=1$ yields the trivial relation
\[
 (e_{F_{i+1}}-e_F)-(e_{F_{i+1}}-e_F) \ = \ 0.
\]
If $k=r_{i+1}-r_i>1$, then $(i,k)$-balancing results from $(i,k-1)$-balancing after a trivial rearrangement of terms, which brings, however, the relation into a more symmetric shape. In particular, $(i,k)$-balancing implies that
\[
 \sum_{l=2}^k \quad (-1)^k \hspace{-10pt} \sum_{F_i\cover F^{(1)}\cover \dotsb\cover F^{(l)}\leq F_{i+1}} \hspace{-10pt} e_{F^{(l-1)}} 
\]
is contained in the linear subspace spanned by $\fc_{\cF}$.


\section{Relation to CSM-balancing}
\label{section: relation to CSM-balancing}

Lopez de Medrano, Rinc\'on and Shaw introduce in \cite{Lopez-Rincon-Shaw17} the \textit{$k$-th Chern-Schwartz-MacPherson cycle $\csm_k(M)$} of a matroid $M$, which is the $k$-skeleton of the Bergman fan of the matroid where $k$ is an integer between $0$ and $\rk M$. The fan $\csm_k(M)$ comes with certain weights on its top-dimensional cones, which turn $\csm_k(M)$ into a tropical variety, i.e.\ a polyhedral complex that is balanced in codimension $1$. We refer to this result by \textit{CSM-balancing} for short.

We can reinterpret this result as follows. We can endow all cones $\fc_\cF$ of the Bergman fan $B(M)$ of $M$ with certain weights $\mu_\cF$ such that for every cone $\fc_\cF$ of dimension $k<\rk M$,
\[
 \sum_{\substack{\fc_{\cF}\subset\fc_{\cF'}\\ \dim \fc_{\cF'}=k+1}} \mu_{\cF'} \ e_{\cF'}
\]
lies in the subspace spanned by $\fc_\cF$ where $e_{\cF'}$ is a primitive vector of $\fc_{\cF'}$ modulo $\fc_\cF$.

It is possible to express the weights $\mu_{\cF'}$ as a linear combination of the number of cones containing $\cF'$ (ordered by their types), which indicates a relation to higher balancing in the sense of this paper. We were able to verify that CSM-balancing can be traced back to certain linear combinations of the relations that occur in Theorem \ref{thm: higher balancing} up to codimension $3$, i.e.\ for $\rk M-k-1\leq 3$. We strongly suspect that CSM-balancing can be deduced from higher balancing in general. A proof of this conjecture would be desirable.


\section{Example}
\label{section: example}

As an example, we consider the uniform matroid $M=U_{3,4}$ of rank $3$ with ground set $E=\{1,2,3,4\}$. Its lattice of flats is as follows.
\[
 \begin{tikzpicture}
  \node (E) at (5,3) {$E$};
  \node (12) at (0,2) {$\{1,2\}$};
  \node (13) at (2,2) {$\{1,3\}$};
  \node (14) at (4,2) {$\{1,4\}$};
  \node (23) at (6,2) {$\{2,3\}$};
  \node (24) at (8,2) {$\{2,4\}$};
  \node (34) at (10,2) {$\{3,4\}$};
  \node (1) at (2,1) {$\{1\}$};
  \node (2) at (4,1) {$\{2\}$};
  \node (3) at (6,1) {$\{3\}$};
  \node (4) at (8,1) {$\{4\}$};
  \node (0) at (5,0) {$\emptyset$};
    \path[-] 
    (12) edge (E)
    (13) edge (E)
    (14) edge (E)
    (23) edge (E)
    (24) edge (E)
    (34) edge (E)
    (1) edge (12)
    (1) edge (13)
    (1) edge (14)
    (2) edge (12)
    (2) edge (23)
    (2) edge (24)
    (3) edge (13)
    (3) edge (23)
    (3) edge (34)
    (4) edge (14)
    (4) edge (24)
    (4) edge (34)
    (0) edge (1)
    (0) edge (2)
    (0) edge (3)
    (0) edge (4)
    ;
 \end{tikzpicture}
\]
The different types of cones of the Bergman fan $B(M)$ of $M$ are described in the following table where we fix an identification $E=\{i,j,k,l\}$.

\medskip
\begin{center}
 \begin{tabular}{|l|l|c|c|c|}
  \hline
  type        & typical flag $\cF$            & $(x_1,\dotsc,x_4)$ in $\fc_\cF$ iff.\ & dimension & number of cones \\
  \hline\hline 
  $(0,3)$     & $(\emptyset,E)$               & $x_i=x_j=x_k=x_l$                     & $1$       & $1$\\ 
  \hline
  $(0,1,3)$   & $(\emptyset,\{i\},E)$         & $x_i\geq x_j= x_k= x_l$               & $2$       & $4$ \\
  \hline
  $(0,2,3)$   & $(\emptyset,\{i,j\},E)$       & $x_i= x_j\geq x_k= x_l$               & $2$       & $6$ \\
  \hline
  $(0,1,2,3)$ & $(\emptyset,\{i\},\{i,j\},E)$ & $x_i\geq x_j\geq x_k= x_l$            & $3$       & $12$ \\
  \hline
 \end{tabular}
\end{center}
\medskip

We find non-trivial balancing conditions (i.e.\ $k<r_{i+1}-r_i$) at all cones of positive codimension, i.e.\ at cones of types $(0,1,3)$, $(0,2,3)$ and $(0,3)$. These balancing conditions are, up to permuting $E=\{i,j,k,l\}$, as follows.

\bigskip
\begin{tabular}{lr}
$(1,1)$-balancing at $\fc_{(\emptyset,\{i\},E)}$: & $\displaystyle \sum_{\{i\}\cover \{i,n\}<E} \hspace{-12pt} (e_{\{i,n\}}-e_i) \ - \ (e_E-e_i) \ = \ 0$ \\[25pt]
$(0,1)$-balancing at $\fc_{(\emptyset,\{i,j\},E)}$: & $\displaystyle \sum_{\emptyset\cover \{m\}<\{i,j\}} \hspace{-15pt} (e_{m}-e_\emptyset) \ - \ (e_{\{i,j\}}-e_\emptyset) \ = \ 0$ \\[25pt]
$(0,1)$-balancing at $\fc_{(\emptyset,E)}$: & $\displaystyle \sum_{\emptyset\cover \{m\}<E} \hspace{-10pt} (e_{m}-e_\emptyset) \ - \ (e_E-e_\emptyset) \ = \ 0$ \\[25pt]
$(0,2)$-balancing at $\fc_{(\emptyset,E)}$: & $\displaystyle \hspace{-30pt}\sum_{\emptyset\cover \{m\}\cover \{m,n\}<E} \hspace{-25pt} (e_{\{m,n\}}-e_m) \ - \hspace{-12pt} \sum_{\emptyset\cover \{m\}<E} \hspace{-10pt} (e_{E}-e_\emptyset) \ + \ (e_E-e_\emptyset) \ = \ 0$ \\
\end{tabular}
\bigskip

\noindent
Note that the first two conditions are balancing conditions in codimension $1$, i.e.\ they are the `classical' balancing conditions for tropical varieties. In particular note that these classical balancing conditions are divided into two different types.

\begin{small}
\bibliographystyle{plain}

\begin{thebibliography}{10}

\bibitem{Ardila-Klivans06}
Federico Ardila and Caroline~J. Klivans.
\newblock The {B}ergman complex of a matroid and phylogenetic trees.
\newblock {\em J. Combin. Theory Ser. B}, 96(1):38--49, 2006.

\bibitem{Fulton93}
William Fulton.
\newblock {\em Introduction to toric varieties}, volume 131 of {\em Annals of
  Mathematics Studies}.
\newblock Princeton University Press, Princeton, NJ, 1993.
\newblock The William H. Roever Lectures in Geometry.

\bibitem{Giansiracusa-Giansiracusa16}
Jeffrey Giansiracusa and Noah Giansiracusa.
\newblock Equations of tropical varieties.
\newblock {\em Duke Math. J.}, 165(18):3379--3433, 2016.

\bibitem{Hampe15}
Simon Hampe.
\newblock Tropical linear spaces and tropical convexity.
\newblock {\em Electron. J. Combin.}, 22(4):Paper 4.43, 20, 2015.

\bibitem{Huh14}
June Huh.
\newblock Rota's conjecture and positivity of algebraic cycles in permutohedral
  varieties.
\newblock Thesis. Online available at
  \url{http://www-personal.umich.edu/~junehuh/thesis.pdf}, 2014.

\bibitem{Lopez-Rincon-Shaw17}
Lucia Lopez de Medrano, Felipe Rinc\'on and Kristin Shaw.
\newblock Chern-Schwartz-MacPherson cycles of matroids.
\newblock Preprint, \arxiv{1707.07303}, 2017.

\bibitem{Lorscheid15}
Oliver Lorscheid.
\newblock Scheme theoretic tropicalization.
\newblock Preprint, \arxiv{1508.07949}, 2015.

\bibitem{Maclagan-Rincon14}
Diane Maclagan and Felipe Rinc\'on.
\newblock Tropical schemes, tropical cycles, and valuated matroids.
\newblock Preprint, \arxiv{1401.4654}, 2014.

\bibitem{Maclagan-Rincon18}
Diane Maclagan and Felipe Rinc\'{o}n.
\newblock Tropical ideals.
\newblock {\em Compos. Math.}, 154(3):640--670, 2018.

\bibitem{Oxley92}
James~G. Oxley.
\newblock {\em Matroid theory}.
\newblock Oxford Science Publications. The Clarendon Press, Oxford University Press, New York, 1992.

\bibitem{Speyer05}
David~E. Speyer.
\newblock Tropical geometry.
\newblock Thesis. Online available at
\url{http://www-personal.umich.edu/~speyer/thesis.pdf}, 2005.

\bibitem{Speyer08}
David~E. Speyer.
\newblock Tropical linear spaces.
\newblock {\em SIAM J. Discrete Math.}, 22(4):1527--1558, 2008.

\bibitem{Sturmfels02}
Bernd Sturmfels.
\newblock {\em Solving systems of polynomial equations}, volume~97 of {\em CBMS
  Regional Conference Series in Mathematics}.
\newblock American Mathematical Society, Providence, RI, 2002.

\end{thebibliography}

\end{small}

\end{document}